\newtheorem{theor}{Theorem}
\newtheorem{defin}{Definition}
\newtheorem{rem}{Remark}
\newtheorem{prop}{Proposition}
\newtheorem{lem}{Lemma}
\newtheorem*{TA}{Theorem A} 
\newtheorem*{TB}{Theorem B}
\newcommand{\W}{\Omega}
\newcommand{\w}{\omega}
\newcommand{\deb}{\overline\partial}
\newcommand{\de}{\partial}
\newcommand{\s}{Szeg\H{o} kernel}
\newcommand{\CP}{\mathds{C}\mathrm{P}}
\newcommand{\KE}{K\"{a}hler-Einstein\ }
\newcommand{\C}{\mathbb{C}}
\newcommand{\K}{K\"{a}hler\ }
\begin{document}

\title[$\Delta$-property for complex space forms]{On the $\Delta$-property for complex space forms}
\author{Roberto Mossa}
\address{Departamento de Matemática \\
 Instituto de Matemática e Estatística \\
Universidade de São Paulo (Brazil)}
\email{robertom@ime.usp.br}

\makeatletter
\@namedef{subjclassname@2020}{%
  \textup{2020} Mathematics Subject Classification}
\makeatother

\subjclass[2020]{32M15, 32Q15, 35J05}

\keywords{\K manifolds; Hermitian symmetric spaces; \K Laplacian}

\thanks{
The author was supported by a grant from Fapesp (2018/08971-9)}

\begin{abstract} Inspired by the work of Z. Lu and G. Tian \cite{lutian}, A. Loi, F. Salis and F. Zuddas
 address in \cite{laplace} the problem of studying those \K manifolds  
satisfying the $\Delta$-property, i.e. such that  on a neighborhood of each of its points  the $k$-th power of the  \K Laplacian is a polynomial function  of the  complex Euclidean Laplacian,
for all positive integer $k$. In particular they conjectured that if  \K manifold  satisfies the $\Delta$-property then it is a complex space form. This paper is dedicated to the proof of the validity of this conjecture. 
\end{abstract}

\maketitle

\section{Introduction and statement of the main result}

Let $\Delta$ be the \K Laplacian on an $n$-dimensional \K manifold $(M, g)$ i.e., in local coordinates $z =\left\{z_j\right\}$, 
$$\Delta=\sum_{i,j=1}^n g^{i\bar j}\frac{\de^2}{\de z_j\de\bar z_i},$$
where $ g^{i\bar j}$ denotes the inverse matrix of the \K metric. We define the complex Euclidean Laplacian with respect to $z$ as 
the differential operator
$$\Delta_c^z= \sum_{i=1}^n\frac{\de^2}{\de z_i\de\bar z_i}.$$ 

A. Loi, F. Salis and F. Zuddas introduce in \cite{laplace}, the following notion of $\Delta$-property:

\begin{defin}[$\Delta$-property]\rm\label{defdprop}
For any arbitrary point  $x \in M$ there exists a coordinate system $z$ centered at $x$, such that  every  smooth function $\phi$ defined in a neighborhood of $x$  fulfills   the following equation for every positive integer $k$
 \begin{equation}\label{affine}
 \Delta^k\phi(0)=p_k(\Delta_c^z)\phi(0),
 \end{equation}
where $p_k$ is a monic polynomial of degree $k$, independent of $x$, with real coefficients.
\end{defin}

A key point in  Lu and Tian's proof of the local rigidity theorem (\cite{lutian} Theor. 1.2) supporting their conjecture about the characterization of the Fubini-Study metric $g_{FS}$ on $\CP^n$ through the vanishing of the log-term of the universal bundle, was that the $\Delta$-property is satisfied by $\left(\CP^n, g_{FS}\right)$ with respect to affine coordinates.

In \cite{laplace} A. Loi, F. Salis and F. Zuddas address the problem of studying those \K manifolds  
satisfying the $\Delta$-property. They observe that condition \eqref{affine} is satisfied for any positive integer $k$ in the center of a radial metric\footnote{Namely a \K metric admitting a \K potential which depends only on the sum $|z|^2 = |z_1|^2 + \mathellipsis+ |z_n|^2$ of the moduli of a local coordinates’ system $z$.}. It arises naturally the problem to try to classify \K manifolds satisfying the $\Delta$-property. In this direction their main results proved in \cite{laplace} are the following two theorems:
\begin{TA}{\rm(\cite[Theorem 1.3]{laplace}).}
Let $(M,g)$ be a \K  manifold which satisfies the $\Delta$-property. Then its curvature tensor is parallel.
\end{TA}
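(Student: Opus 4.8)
The plan is to turn the $\Delta$-property into a hierarchy of pointwise conditions on the jet of the metric at a point and to read $\nabla R=0$ off the first few of them. Since parallelism of the curvature is a pointwise property it suffices to work at an arbitrary $x\in M$; fix there the coordinate system $z$ of Definition~\ref{defdprop}, and write $\Delta=\Delta_c^z+L$ with $L=\sum_{i,j}(g^{i\bar j}-\delta_{ij})\,\partial_{z_j}\partial_{\bar z_i}$, whose coefficients vanish at $0$. Expanding $\Delta^k=(\Delta_c^z+L)^k$ into its $2^k$ ordered words in $\{\Delta_c^z,L\}$ and evaluating at $0$, every word beginning with $L$ vanishes (its outermost coefficient does); hence $\Delta^k\phi(0)$ equals the sum of the words beginning with $\Delta_c^z$, and is an explicit universal polynomial in the jet of $\phi$ at $0$ whose coefficients are built from the jet of the metric at $0$. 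On the other hand $p_k(\Delta_c^z)\phi(0)$, being a polynomial in the constant-coefficient operator $\Delta_c^z$, involves only the even-order part of the jet of $\phi$, and only through the $\mathrm U(n)$-invariant combinations $(\Delta_c^z)^j\phi(0)$. Matching the two sides of \eqref{affine} for each $k$ therefore forces: (a) every odd-order-in-$\phi$ component of $\Delta^k\phi(0)$ vanishes identically; (b) each even-order component equals the prescribed multiple of a power of $\Delta_c^z$.

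First I would collect the low-order consequences. For $k=1$, (b) gives $g_{i\bar j}(0)=\delta_{ij}$. Applying (a) to the highest odd orders for the first few values of $k$ produces a linear system on the ``holomorphic-type'' part of the jet of the metric at $0$ (equivalently, on the bidegree-$(p,1)$ and $(1,q)$ parts of a \K potential) which, together with the \K symmetry $\partial_{z_a}g_{i\bar j}(0)=\partial_{z_i}g_{a\bar j}(0)$, forces $\partial g(0)=0$; thus the Christoffel symbols vanish at $x$ and $z$ is a \K (Bochner) normal coordinate system to the relevant order. Then (b) at $k=2$ in order $2$ reads $\Ric_{i\bar j}(0)=-c_1\,\delta_{ij}$ with $c_1$ the point-independent coefficient of $\Delta_c^z$ in $p_2$; hence $g$ is \KE with Einstein constant independent of the point. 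This last normalization is exactly what will annihilate the obstructing trace terms below.

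Next I would extract $\nabla R=0$. As the Christoffel symbols vanish at $x$, $\nabla R(0)$ equals (up to sign, and as a component) the bidegree-$(3,2)$ part of the \K potential, namely $\partial_{z_m}\partial_{z_k}\partial_{\bar z_l}g_{i\bar j}(0)$. To kill it I would use (a) at $k=4$: the order-$5$-in-$\phi$ component of $\Delta^4\phi(0)$ must vanish. Since $\partial g(0)=0$ each coefficient of $L$ vanishes to second order at $0$, so only words containing a single $L$ contribute, and the pertinent data is precisely the cubic part of $g^{i\bar j}$ at $0$ (no $R^2$ or other lower-order terms intervene). Because the two derivatives of each block $\partial_{z_a}\partial_{\bar z_a}$ of $\Delta_c^z$ carry the same summation index, every contribution in which the three metric-derivatives are not drawn from three distinct such blocks produces only single traces of $\nabla R$, and those vanish by the contracted second Bianchi identity together with $\nabla\Ric=0$. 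The one remaining contribution --- three metric-derivatives from three distinct blocks, two holomorphic and one anti-holomorphic, coming from the word $(\Delta_c^z)^3L$ --- pairs $\nabla R$, seen through its \K symmetries as an element of $\mathrm{Sym}^{3}\!\otimes\mathrm{Sym}^{2}$, against the bidegree-$(2,3)$ part of the $5$-jet of $\phi$, and this pairing is perfect. Taking $\phi$ with $5$-jet of bidegree $(2,3)$ isolates this term, and its vanishing for all such $\phi$ gives $\nabla R(0)=0$; as $x$ was arbitrary, $\nabla R\equiv 0$.

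The hard part, and the conceptual heart, is disentangling the \emph{tensorial} data ($R$ and $\nabla R$) from the \emph{gauge} data, namely the purely holomorphic parts of the jet of the metric --- the bidegree-$(p,0)$ and $(p,1)$ parts of the potential --- which appear in the expansion of $\Delta^k\phi(0)$ but are invisible to $\nabla R$: since $p_k(\Delta_c^z)$ sees only the $\mathrm U(n)$-invariant part of the jet of $\phi$, the $\Delta$-property constrains only the correspondingly invariant part of the jet of the metric, and recovering the full identity $\nabla R=0$ from those invariants forces one to use the representation-theoretic structure of the \K curvature --- its symmetries and Bianchi identities --- with the \KE normalization precisely removing the trace terms that would otherwise obstruct the conclusion. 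A secondary, bookkeeping difficulty is that, contrary to the situation for \K normal coordinates in general, one is not free to change coordinates here without risking destruction of the $\Delta$-property, so the normalization $\partial g(0)=0$ really has to be deduced from the lower-$k$ instances of~(a) and cannot simply be arranged.
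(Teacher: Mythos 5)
First, a structural remark: this paper does not prove Theorem A at all. It is quoted verbatim from \cite[Theorem 1.3]{laplace} and used as a black box in the proof of Theorem \ref{main}, so there is no proof in this paper to compare yours against; I can only assess your argument on its own terms. On those terms, the scaffolding is sound and the low-order steps check out: writing $\Delta=\Delta_c^z+L$ and discarding words beginning with $L$ is correct; $k=1$ does give $g_{i\bar j}(0)=\delta_{ij}$; the bidegree-$(1,2)$ component at $k=2$ together with the \K symmetry does force $\partial g(0)=0$ (the tensor $T_{hj\bar i}=\partial_{z_h}g_{j\bar i}(0)$ is symmetric in $h,j$ and your condition makes it antisymmetric under $h\leftrightarrow i$, whence $T=0$); and the bidegree-$(1,1)$ component at $k=2$ gives $\Ric=\lambda g$ with $\lambda$ point-independent, exactly as in \eqref{sumder2}. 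Your bookkeeping that words with two or more $L$'s cannot produce fifth-order derivatives of $\phi$ at $k=4$, and that all single-$L$ contributions except the ``three distinct blocks'' term of $(\Delta_c^z)^3L$ are traces of $\nabla R$ killed by $\nabla\Ric=0$ and the contracted Bianchi identity, is also correct.

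The genuine gap is the single sentence ``this pairing is perfect,'' which is where the entire weight of the theorem rests. The condition you extract is the vanishing, for all $\phi$, of $\sum\partial_{h_1}\partial_{h_2}\bar\partial_{h_3}g^{i\bar j}(0)\,\partial_{h_3}\partial_{j}\bar\partial_{h_1}\bar\partial_{h_2}\bar\partial_{i}\phi(0)$, i.e.\ the vanishing of an \emph{index-shuffled symmetrization} of $\nabla R\in\mathrm{Sym}^3\otimes\overline{\mathrm{Sym}^2}$: two of the three holomorphic slots of $\nabla R$ are contracted against antiholomorphic slots of the $5$-jet and one antiholomorphic slot against a holomorphic one. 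This is \emph{not} the tautological pairing of a space with its dual, and its nondegeneracy is a real claim, not a formality --- a priori such shuffle-and-resymmetrize maps can have kernel. The claim can be salvaged: by Schur's lemma the induced map $\mathrm{Sym}^3\otimes\overline{\mathrm{Sym}^2}\to\mathrm{Sym}^2\otimes\overline{\mathrm{Sym}^3}$ acts by a scalar on each of the three $\U(n)$-isotypic components; the two trace components of $\nabla R$ are already zero by $\nabla\Ric=0$, and evaluating the symmetrization on the traceless element $T_{111;\bar2\bar2}=1$ (all other components zero) yields the nonzero value $\tfrac16$ in the slot $U_{12;\bar1\bar1\bar2}$, so the scalar on the top component is nonzero and $\nabla R(0)=0$ follows. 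But as written this step is asserted rather than proved, and without it the argument establishes nothing beyond the Einstein condition; you need to supply either this representation-theoretic computation or an explicit choice of test monomials $\phi=z_az_b\bar z_c\bar z_d\bar z_e$ from which $\nabla R=0$ is deduced by hand.
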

\begin{TB}{\rm (\cite[Theorem 1.4]{laplace}).}
An Hermitian symmetric space of classical type  satisfying the $\Delta$-property is a complex space form.
\end{TB}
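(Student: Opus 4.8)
The plan is to recover the curvature tensor of $(M,g)$ at an arbitrary point directly from the instances $k=2$ and $k=3$ of \eqref{affine}, by Taylor expansion, and to recognise the resulting algebraic identity as constant holomorphic sectional curvature. By Theorem A the curvature of $(M,g)$ is parallel, so around a point $x$ we may use \K normal (Bochner) coordinates $z$ realising \eqref{affine}; in these coordinates the inverse metric has the universal expansion
\[
g^{i\bar j}(z)=\delta_{ij}+\sum_{k,l}R_{i\bar jk\bar l}\,z_k\bar z_l+O(|z|^{4}),
\]
the cubic term being absent because it is linear in $\nabla R$ (a convention-dependent sign is absorbed into $R$). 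Since \eqref{affine} must hold for \emph{every} smooth $\phi$ and both sides are linear functionals of the jet of $\phi$ at $0$, one may match homogeneous components degree by degree.

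\emph{The case $k=2$.} Writing $\Delta=\Delta_c^z+\sum_{i,j,k,l}R_{i\bar jk\bar l}z_k\bar z_l\,\partial_j\partial_{\bar i}+(\text{higher order})$ and computing, the higher-order terms and all products of corrections drop out at degrees $0$ and $2$, leaving
\[
\Delta^2\phi(0)=(\Delta_c^z)^2\phi(0)+\sum_{i,j}\rho_{i\bar j}\,\partial_j\partial_{\bar i}\phi(0),\qquad \rho_{i\bar j}=\sum_k R_{i\bar jk\bar k}.
\]
Testing \eqref{affine} on $\phi\equiv 1$ shows $p_2$ has no constant term; matching the degree-$2$ part then forces $\rho_{i\bar j}=\lambda\,\delta_{ij}$, i.e. $(M,g)$ is \KE, $\lambda$ being the (by Definition \ref{defdprop}, $x$-independent) subleading coefficient of $p_2$.

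\emph{The case $k=3$}, the heart of the matter. Expanding $\Delta^3\phi(0)$ and organising the terms by the order of vanishing at $0$ of the metric Taylor coefficients against the number of derivatives reaching $\phi$, one checks that the genuinely fourth-order-in-$\phi$ part receives contributions only from the quadratic coefficient $R_{i\bar jk\bar l}z_k\bar z_l$ of $g^{i\bar j}$ — the quartic coefficient and every product of two or more corrections feed only the degree-$\le 2$ part. Using the \KE relation one obtains that the degree-$4$ homogeneous part of $\Delta^3\phi(0)$ equals
\[
3\lambda\,(\Delta_c^z)^2\phi(0)+2\sum_{i,j,k,l}R_{i\bar jk\bar l}\,\partial_j\partial_l\partial_{\bar i}\partial_{\bar k}\phi(0).
\]
The degree-$4$ part of $p_3(\Delta_c^z)\phi(0)$ is $a_2\,(\Delta_c^z)^2\phi(0)$, with $a_2$ the $x$-independent coefficient of $t^{2}$ in $p_3$. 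As $\phi$ ranges over all smooth functions, $\partial_j\partial_l\partial_{\bar i}\partial_{\bar k}\phi(0)$ realises every tensor with the symmetries $j\leftrightarrow l$ and $i\leftrightarrow k$ — exactly the symmetries of $R_{i\bar jk\bar l}$ — so the matching forces
\[
R_{i\bar jk\bar l}(x)=c\,(\delta_{ij}\delta_{kl}+\delta_{il}\delta_{kj}),\qquad c:=\tfrac{1}{4}(a_2-3\lambda).
\]
Because $c$ does not depend on $x$, this says precisely that $(M,g)$ has constant holomorphic sectional curvature; hence $(M,g)$ is a complex space form, proving the conjecture.

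The main obstacle is the combinatorial control of the $k=3$ expansion: one must verify rigorously that the higher Taylor coefficients of the metric and the iterated corrections produce no spurious fourth-order-in-$\phi$ terms, and that the odd-degree and the mixed $\partial^{a}\bar\partial^{b}$ (with $a\neq b$) parts of $\Delta^3\phi(0)$, which \eqref{affine} also constrains, do not interfere; carrying this out is cleanest in Bochner coordinates, where the Taylor expansion of the metric is a universal polynomial in $R$ and its covariant derivatives, used together with the \K symmetries of $R$. We emphasise that the argument uses neither the classification of Hermitian symmetric spaces nor Theorem B; in particular it covers the exceptional Hermitian symmetric spaces and the reducible case left open there — any such space that is not a complex space form (for instance $\CP^{a}\times\CP^{b}$, a higher Grassmannian, or an exceptional domain) fails the $k=2$ or the $k=3$ instance of \eqref{affine}.
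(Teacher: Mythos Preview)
Your argument is essentially correct and proves not just Theorem~B but the full conjecture (Theorem~\ref{main}); however, it follows a genuinely different route from the paper's. The paper reduces, via Theorem~A and \cite[Theorem 2.1]{laplace}, to irreducible Hermitian symmetric spaces, and then uses Jordan triple system theory (Lemma~\ref{APcoord}) to embed a totally geodesic polydisc $\C H^1_r$; the contradiction for rank $\geq 2$ is obtained by testing \eqref{laplcube} against the two specific functions $|z_1|^4$ and $|z_1z_2|^2$, and the compact case is handled by duality. You instead bypass the symmetric-space structure entirely: matching the bidegree-$(2,2)$ fourth-order part of $\Delta^3\phi(0)$ in Bochner coordinates against $a_2(\Delta_c^z)^2\phi(0)$ forces $R_{i\bar jk\bar l}=c(\delta_{ij}\delta_{kl}+\delta_{il}\delta_{kj})$ directly, with $c$ point-independent. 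Your approach is more elementary and uniform (no classification, no Jordan triples, no compact/noncompact duality, exceptional domains covered automatically); the paper's approach, while heavier, yields the structural byproduct of the polydisc embedding and makes the failure of the $\Delta$-property for rank $\geq 2$ explicitly visible on concrete test functions.

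One point deserves sharpening. You write that Bochner coordinates $z$ may be taken to \emph{realise} \eqref{affine}; this is not what the $\Delta$-property asserts --- it only gives \emph{some} coordinate system $f$. What makes your $(2,2)$-matching valid is precisely the fact (from \cite[Theorem 2.1]{laplace}) that the holomorphic change $f\mapsto z$ has vanishing second-order jet at the origin, so that the corrections to $(\Delta_c^f)^k\phi(0)$ coming from the coordinate change contribute only terms of type $(3,1)$ and $(1,3)$ at fourth order (exactly the displayed terms in the paper's computation of $p_3(\Delta_c^f)\phi(0)$), never of type $(2,2)$. You allude to this in your final paragraph, but it is the crux rather than a side issue, and should be stated as such.
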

By virtue of these two results the author conjectured in \cite{laplace} that complex space forms can be characterized as the  \K manifolds satisfying  the $\Delta$-property. The proof of this conjecture is indeed the main result of this paper. More precisely we prove the following result:
\begin{theor}\label{main}
Let $M$ be a \K manifolds satisfying the $\Delta$-property. Then $M$ is a complex space form.
\end{theor}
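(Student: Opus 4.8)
The plan is to convert the $\Delta$–property, at an arbitrary point $x\in M$, into pointwise identities for the low–order jet of the metric and for the curvature tensor $R(x)$, using only \eqref{affine} for $k=1,2,3$. Fix $x$, let $z$ be a coordinate system as in Definition~\ref{defdprop}, and work with the Taylor expansions at the origin of $\phi$, of $g_{i\bar j}$ and of $g^{i\bar j}$. For each $k$, the value $\Delta^k\phi(0)$ depends only on the $2k$–jet of $\phi$ at $0$, linearly, with coefficients polynomial in the jet of the metric; equation \eqref{affine} says that this linear functional equals $\phi\mapsto p_k(\Delta_c^z)\phi(0)$, which sees only the \emph{even}–order part of the jet of $\phi$ and has the (point–independent) coefficients of $p_k$.

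First I would use $k=1,2$ to normalise the coordinates. Testing $k=1$ on homogeneous quadratics forces $p_1(t)=t$ and $g_{i\bar j}(0)=\delta_{ij}$. For $k=2$, the functional $\phi\mapsto\Delta^2\phi(0)$ carries third–order terms in $\phi$ with coefficients $\partial_k g^{i\bar j}(0)$ and $\partial_{\bar k}g^{i\bar j}(0)$, whereas $p_2(\Delta^z_c)$ is of even order; the cancellation of these odd terms, together with the Kähler symmetry $\partial_a g_{b\bar c}=\partial_b g_{a\bar c}$, forces $\partial g(0)=0$. Thus $z$ is automatically Kähler–normal to second order and $g^{i\bar j}(z)=\delta_{ij}+R_{i\bar j p\bar q}(x)\,z_p\bar z_q+O(|z|^{3})$, so that only the $(1,1)$–part of the second–order jet of $g$—which is $-R(x)$—can enter what follows. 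Evaluating $k=2$ on homogeneous quadratics $\phi=\sum\phi_{a\bar b}z_a\bar z_b$ gives $\sum_{i,j}\Ric_{i\bar j}(x)\phi_{j\bar i}=\alpha\sum_i\phi_{i\bar i}$, where $\alpha$ is the subleading coefficient of $p_2$; hence $\Ric_{i\bar j}(x)=\alpha\,\delta_{ij}$, and $M$ is Kähler–Einstein.

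The decisive step is $k=3$ applied to homogeneous $(2,2)$–quartics $\phi=\sum\phi_{ab\bar c\bar d}z_az_b\bar z_c\bar z_d$, the coefficients being symmetric in $a\!\leftrightarrow\! b$ and in $\bar c\!\leftrightarrow\!\bar d$. Expanding $\Delta=g^{i\bar j}\partial_j\partial_{\bar i}$ three times, keeping only the terms that survive at $0$, and using that the $(3,1)$– and $(1,3)$–pieces produced in the intermediate jets yield $0$ at the origin when $\phi$ is of type $(2,2)$, one obtains
\[
\Delta^3\phi(0)=12\sum_{i,j,a}\Ric_{i\bar j}(x)\,\phi_{ja\bar i\bar a}+8\sum_{i,j,p,b}R_{i\bar j p\bar b}(x)\,\phi_{jb\bar i\bar p},
\]
while $p_3(\Delta^z_c)\phi(0)=4\beta\sum_{a,c}\phi_{ac\bar a\bar c}$, with $\beta$ the subleading coefficient of $p_3$. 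Inserting $\Ric(x)=\alpha\,g(x)$, the comparison reduces to
\[
\sum_{i,j,p,b}R_{i\bar j p\bar b}(x)\,\phi_{jb\bar i\bar p}=\tfrac{1}{2}\kappa\sum_{a,c}\phi_{ac\bar a\bar c},\qquad \kappa:=\beta-3\alpha,
\]
for every symmetric $(\phi_{ab\bar c\bar d})$. Taking $\phi_{ab\bar c\bar d}=u_au_b\bar w_c\bar w_d$ with $u,w$ arbitrary and polarising, and using the curvature symmetries $R_{i\bar j p\bar b}=R_{p\bar j i\bar b}=R_{i\bar b p\bar j}$, this is equivalent to
\[
R_{i\bar j p\bar b}(x)=\tfrac{1}{4}\kappa\big(\delta_{ij}\delta_{pb}+\delta_{ib}\delta_{pj}\big),
\]
i.e. $g$ has constant holomorphic sectional curvature $\tfrac{1}{2}\kappa$ at $x$. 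Since $\alpha$ and $\beta$, hence $\kappa$, are independent of $x$, the same holds, with the same constant, at every point of $M$; therefore $M$ is a complex space form.

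The one genuinely delicate ingredient is the closed–form evaluation of $\Delta^3\phi(0)$: one must organise the triple expansion of $\Delta$ according to the order of differentiation landing on $\phi$, retain exactly the second–order–in–the–metric contributions, identify them as the two displayed contractions of $R$ and $\Ric$, fix the numerical coefficients $12$ and $8$, and check that the part of the second–order jet of $g$ which need not vanish in these coordinates—its non–$(1,1)$, i.e. $(2,0)+(0,2)$, component, coming from the degree–$4$ part of the Kähler potential—drops out when the test function is of type $(2,2)$, so that no further unknowns appear. The argument is self–contained; Theorems A and B are not used, though Theorem A gives the additional information that $M$ is locally symmetric.
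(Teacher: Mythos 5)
Your proof is correct, but it follows a genuinely different and more direct route than the paper. The paper first invokes \cite[Theorem 1.3]{laplace} to reduce to Hermitian symmetric spaces, decomposes into irreducible factors, builds adapted normal coordinates on each noncompact factor via Jordan triple systems (so that a totally geodesic polydisk sits as a coordinate slice), rules out rank $\geq 2$ by evaluating $\Delta^3$ on the two test functions $|z_1|^4$ and $|z_1z_2|^2$, and handles the compact factors by duality. You instead work pointwise on an arbitrary K\"ahler manifold: $k=1,2$ force $g(0)=\delta$, $\partial g(0)=0$ (the symmetric-in-$(a,b)$, antisymmetric-in-$(a,c)$ tensor $\partial_a g_{b\bar c}(0)$ must vanish) and $\Ric(x)=\alpha g(x)$, and then $k=3$ tested on all $(2,2)$-quartics pins down the full curvature tensor at $x$. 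Your decisive formula for $\Delta^3\phi(0)$ is right: it is exactly the paper's equation \eqref{laplcube} restricted to $(2,2)$-quartics, where the terms carrying $\partial_{lh}g^{i\bar j}$, $\partial_{\bar l\bar h}g^{i\bar j}$ and the third-order jets of any admissible coordinate change all pair with $(3,1)$- or $(1,3)$-derivatives of $\phi$ and hence vanish --- the same mechanism the paper exploits implicitly by choosing $(2,2)$ test functions. Evaluating on $\phi=|\langle z,v\rangle|^4$ then gives pointwise constant holomorphic sectional curvature, the standard polarization identity for K\"ahler curvature tensors recovers $R=\tfrac{\kappa}{4}(\delta\otimes\delta+\delta\otimes\delta)$, and the point-independence of the coefficients of $p_2,p_3$ supplies the Schur step. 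What your approach buys is considerable: it avoids Theorems A and B of \cite{laplace}, the de Rham decomposition, and all Jordan triple machinery, and it proves the sharper statement that \eqref{affine} for $k\leq 3$ alone already characterizes complex space forms among all K\"ahler manifolds. What the paper's approach buys is that each computation is performed on an explicit homogeneous model where the relevant curvature components are known in closed form, so no general coefficient bookkeeping is needed. The only points you should nail down in a full write-up are (i) the constants $12$ and $8$ (which you can cross-check against \eqref{laplcube} and \eqref{comp1}--\eqref{comp2}), and (ii) the displayed expansion of $g^{i\bar j}$, which as written omits the $(2,0)+(0,2)$ quadratic terms --- these need not vanish in your coordinates, though, as you correctly observe, they drop out against $(2,2)$ test functions.
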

The proof of the theorem is based on Jordan triple system machinery. 

\medskip

The author is grateful to Prof. Andrea Loi for all the interesting discussions and the comments that helped him to improve the exposition.

\section{Proof of Theorem \ref{main}}
The first step (see Proposition \ref{hssct}) is to prove that the $\Delta$-property characterizes the complex hyperbolic space among  Hermitian symmetric spaces of noncompact type (from now on HSSNCT). Let us write $\C H^1_r$ to denote the product of $r$ complex hyperbolic spaces $\C H^1=\left\{z \in \C \mid |z|^2 <1\right\}$  equipped with  the product metric $g^r_{hyp}=g_{hyp}\oplus \mathellipsis\oplus g_{hyp}$, where the fundamental form  associated to $g_{hyp}$ is $\w_{hyp}=-\frac{i}{2}\de\deb \log (1-|z|^2)$. We call \emph{affine coordinates}, the coordinates on  $\C H^1_r$ induced by the product. The key result in our proof of Theorem \ref{main} is the following technical lemma, which extends \cite[Lemma 3.1]{laplace} valid for classical Hermitian symmetric spaces of compact type (from now on HSSCT) using Jordan triple system theory instead of Alekseevsky-Perelomov coordinates.  

\begin{lem}\label{APcoord}
Let  $M$ be an $n$-dimensional HSSNCT of rank $r$ endowed with a \KE metric $g$.
 Then there exists a normal global coordinates system $w=\left\{w_j\right\}_{j=1,\dots,n}$ such that the \K immersion's equations of  $\big(\C H^1_r,  g^r_{hyp}\big)$ into $M$ read  as 
 \begin{equation}\label{eqcoord}\begin{split} 
 \begin{cases}
 w_i=z_i &\text{for }i=1,\mathellipsis,r\\
 w_i=0 &\text{for }i=r+1,\mathellipsis,n
 \end{cases} 
 \end{split},\end{equation}
 where  $z=\left\{z_j\right\}_{j=1,\dots,r}$ are  affine  coordinates on  $\C H_r^1$.
\end{lem}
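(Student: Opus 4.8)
The plan is to extract the coordinates $w$ from the Jordan triple system (JTS) attached to $M$, realizing $\C H^1_r$ as the linear span of a Jordan frame inside the Harish--Chandra model of $M$. First, recalling that $M$ is an HSSNCT, I realize it as a bounded symmetric domain $D\subset V\cong\C^n$ in its Harish--Chandra realization, where $V$ carries a Hermitian positive JTS structure of rank $r$; let $\langle\cdot,\cdot\rangle$ be the associated $K$-invariant Hermitian inner product and $N(z,\bar z)$ the generic norm, so that $N(0,0)=1$, $N(z,0)\equiv N(0,\bar z)\equiv 1$, and — since $z\mapsto e^{i\theta}z$ is a triple automorphism for every $\theta$ — all monomials of $N$ have balanced bidegree $(k,k)$, with $N=1-\langle z,z\rangle+\sum_{k\ge2}(-1)^kq_k(z,\bar z)$. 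With the \KE normalization adopted in the statement, the global \K potential of $g$ in these coordinates is $\Phi_g=-\log N(z,\bar z)$, and since $N\equiv1$ on $\{z=0\}$ and on $\{\bar z=0\}$ this $\Phi_g$ is exactly the diastasis of $(M,g)$ centered at $0\in D$.

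Then I fix a Jordan frame $e_1,\dots,e_r$ of $V$ (a maximal family of pairwise orthogonal minimal tripotents). From $N(ze_j,\overline{ze_j})=1-|z|^2$ and $N(z_ie_i+z_je_j,\overline{z_ie_i+z_je_j})=(1-|z_i|^2)(1-|z_j|^2)$ one reads off that $e_1,\dots,e_r$ are orthonormal for $\langle\cdot,\cdot\rangle$, and more generally that the polydisk identity $N\big(\sum_{j=1}^rz_je_j,\overline{\sum_{j=1}^rz_je_j}\big)=\prod_{j=1}^r(1-|z_j|^2)$ holds. Consider the holomorphic map $\pi\colon\C H^1_r\to D$, $\pi(z_1,\dots,z_r)=z_1e_1+\dots+z_re_r$: by the polydisk theorem it is an injective holomorphic (totally geodesic) immersion, and pulling back the potential gives $\pi^*\Phi_g=-\sum_{j=1}^r\log(1-|z_j|^2)$, i.e.\ precisely the \K potential of $g^r_{hyp}$ in affine coordinates. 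Thus $\pi$ is the \K immersion of $\big(\C H^1_r,g^r_{hyp}\big)$ into $(M,g)$ appearing in the statement.

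Finally I extend $e_1,\dots,e_r$ to a $\langle\cdot,\cdot\rangle$-orthonormal basis $e_1,\dots,e_n$ of $V$ and take $w=\{w_j\}_{j=1,\dots,n}$ to be the associated linear coordinates, which are global coordinates on $D$. In this basis $\langle z,z\rangle=\sum_{j=1}^n|w_j|^2$, so, $N$ being a sum of balanced bidegrees with $N(z,0)\equiv1$, one gets $\Phi_g=-\log N=\sum_{j=1}^n|w_j|^2+\big(\text{sum of bidegree }(k,k)\text{ terms, }k\ge2\big)$ and $\Phi_g(w,0)\equiv\Phi_g(0,\bar w)\equiv0$; hence $w$ is a normal global coordinate system for $g$ centered at $0$. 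In these coordinates $\pi(z)=\sum_{i=1}^rz_ie_i$ has components $w_i=z_i$ for $i=1,\dots,r$ and $w_i=0$ for $i=r+1,\dots,n$, which is exactly \eqref{eqcoord}.

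The conceptual heart lies in the second step — the polydisk theorem together with the factorization of the generic norm along a Jordan frame — and no real difficulty is expected there once the JTS dictionary is in place. The point requiring care is matching conventions in the first and third steps: one must check that the Harish--Chandra coordinates, normalized by the $K$-invariant inner product, genuinely meet the requirements of a \emph{normal} coordinate system (quadratic part of $\Phi_g$ equal to $\sum_j|w_j|^2$ and $\Phi_g$ equal to the diastasis), and that the \KE normalization fixed in the statement is the one for which $\pi$ pulls $\Phi_g$ back to the potential of $g^r_{hyp}$ with no stray constant coming from the genus of $D$; both are routine but call for careful bookkeeping with the generic norm.
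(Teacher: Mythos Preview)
Your proof is correct and follows the same route as the paper: realize $M$ as a bounded symmetric domain via its HPJTS, take a Jordan frame, identify its complex span with a totally geodesic copy of $(\C H^1_r,g^r_{hyp})$ through the factorization of the generic norm, and pass to linear coordinates on $V$ adapted to the frame. The only difference is in the final normality step: the paper completes the frame to a basis with $g_{j\bar k}(0)=\delta_{jk}$ and then performs the standard quadratic change $\tilde w_j=w_j+\tfrac12 A^j_{kl}w_kw_l$, checking that $A^j_{kl}=0$ for $1\le j,k,l\le r$ so that \eqref{eqcoord} survives, whereas you note that the $U(1)$-invariance of $N$ forces $-\log N$ to have only balanced bidegree terms, so the orthonormalized Harish--Chandra coordinates are already normal. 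In fact your observation shows that every $A^j_{kl}$ vanishes, so the paper's quadratic change is the identity; this is a small but genuine simplification of the same argument.
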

\proof
Without loss of generality we can assume that $M$ is irreducible. Throughout the proof we use Jordan triple system theory, referring the reader  to \cite{sympldual,lm1,lmz,bisy,loos,m1,mz,roos} for details and further applications.

Let $\left(V, \left\{,,\right\}\right)$ be the Hermitian positive Jordan triple system (from now on HPJTS) associated to $M$. Let $x= \lambda_1 c_1 + \dots + \lambda_s c_s$, $ \lambda_1>\dots > \lambda_s>0$  be the spectral decomposition (\cite[Definition VI.2.2]{roos}) of an element $x\in V$.   By \cite[Proposition VI.4.2]{roos}, we can realize $(M,g)$ as a bounded symmetric domain 
\begin{equation}\label{eqbsd}\begin{split} 
 \W=\left\{x\in V\mid \lambda_1<1 \right\}
\end{split}\end{equation}
equipped with the \KE form (unique up to rescaling):
$
\omega(z)=-\frac{i}{2}\de\deb \log N(z,\bar z),
$
where $N$ is the generic norm of $V$ (\cite[Section 2.2]{sympldual}).

 Consider a frame $\mathcal B =\left\{e_1,\dots,e_r\right\}\subset V$, namely a maximal set of mutually orthogonal, primitive tripotents (\cite[Definition VI.2.1]{roos}). Let $W\subset V$ be the complex vector subspace $W=\operatorname {span}_\C\!\left\{e_1,\dots,e_r\right\}$. If $x=\sum_{j=1}^{r} x_{j} e_{j}$, $y=\sum_{j=1}^{r} y_{j} e_{j},$  $z=\sum_{j=1}^{r} z_{j} e_{j}$
are elements of $W$, we have (see \cite[(6.11)]{roos})
\begin{equation*}
\{x, y, z\}=2 \sum_{j=1}^{s} x_{j} \bar y_{j} z_{j} e_{j}\in W.
\end{equation*}
Hence any frame $\left\{c_1,\dots,c_r\right\}$ of $W$ has the form 
\begin{equation}\label{eqspnorm}
c_j= e^{i\theta_j}e_{\sigma(j)}, \quad {1 \leqslant j \leqslant r}
\end{equation}
where $\sigma \in \mathfrak{S}_{r}$ is a permutation of $\{1, \ldots, r\}$ and $W$ is a Hermitian positive Jordan triple subsystem of $\left(V, \left\{,,\right\}\right)$. It is well know that there exists a one to one correspondence between sub-HPJTS $V'\subset V$ and complex totally geodesic sub-HSSNCT $\W ' \subset \W$ (see e.g. \cite[Proposition 2.1]{sympldual}), given by 
$$
V' \mapsto \W'=\W\cap V' .
$$
We want to determine the HSSNCT associated to $\left(W, \left\{,,\right\}_{\mid W}\right)$.  Let $x\in W$ and let $x= \lambda_1 c_1 + \dots + \lambda_s c_s$, $ \lambda_1>\dots > \lambda_s>0$  be its spectral decomposition (notice that, by \cite[Proposition VI.2.4]{roos}, an element $x\in W$ has the same spectral decomposition in $V$ and $W$). As recalled above, the associated HSSNCT realized as a bounded symmetric domain $\Delta^r$ of $W$ is given by
$
\Delta^r=\left\{x \in W \mid \lambda_1<1 \right\}.
$
Fixed the complex basis $\mathcal B =\left\{e_1,\dots,e_r\right\}$ defined above, we can identify $W$ with $\C^r$. With respect to this coordinates we have
$$
\Delta^r=\left\{\left(z_1,\dots,z_r\right) \mid |z_j| <1, \, j=1,\dots,r\right\}\subset \C^r
$$
 (compare this construction with \cite[Sec. 4.5.]{bisy} and \cite[Example 6]{lm1}). Denoted by $N_W$ the generic norm of $W$, we see that the associated \KE form is given by
$$
\omega_\Delta(z)=-\frac{i}{2}\de\deb \log N_W(z,\bar z)=-\frac{i}{2} \partial \bar{\partial} \log \left(\prod_{j=1}^{r}\left(1-\left|z_{j}\right|^{2}\right)\right),
$$
where we used \eqref{eqspnorm} and the fact (see e.g. \cite[Proposition VI.2.6]{roos}) that, with respect to spectral coordinates $x= \lambda_1 c_1 + \dots + \lambda_s c_s $, the generic norm $N$ of an HPJTS is given by 
$
N(x,x)=\prod_{j=1}^s\left(1-\lambda_j^2\right).
$
We conclude that $\left(\Delta, \w_\Delta\right)=\left(\C H^1_r,  g^r_{hyp}\right)$. If we complete $\mathcal B $ to a complex basis $\left\{e_1,\dots, e_r,f_1,\dots, f_{n-r}\right\}$  of $V$, we can identify $V$ with $\C^n$ obtaining coordinates $\tilde w=\left\{\tilde w_j\right\}$ for $M$ satisfying \eqref{eqcoord}. Let us choose $f_1,\dots, f_{n-r}$ in such a way that $g_{jk}(0) = g\left(\frac{\de}{\de z_j}, \frac{\de}{\de \bar z_k}\right)(0)=\delta_{jk}$, $1\leq j,k \leq n$. We introduce now (see e.g. \cite[4.17 Theorem]{ballmann}) new coordinates $w=\left\{w_j\right\}$, by solving $\tilde w_{j}={w}_{j}+\frac{1}{2} A_{k l}^{j} {w}_{k} {w}_{l}$, where $A_{k l}^{j}=-\frac{\partial g_{l \bar{j}}}{\partial z_{k}}(0)$, obtaining normal coordinates which satisfy \eqref{eqcoord} (notice that $A_{k l}^{j}=0$, for $1\leq j,k,l \leq r$). The proof is complete.
\endproof

We are now in a  position to characterize complex hyperbolic spaces among irreducible HSSCNT. This result should be compared with \cite[Theorem 3.2]{laplace}, where the authors characterize complex projective spaces among irreducible classical HSSCT via $\Delta$-property.

\begin{prop}\label{hssct}
The complex hyperbolic space is the unique HSSNCT which satisfies the $\Delta$-property.
\end{prop}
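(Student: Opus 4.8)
The plan is to combine the structural Lemma~\ref{APcoord} with the known behavior of the $\Delta$-property under \K immersions and taking products, reducing everything to an explicit computation on $\CH^1_r$. First I would recall (or prove, if not already available from \cite{laplace}) two hereditary features of the $\Delta$-property: (i) if $(M,g)$ satisfies the $\Delta$-property and $(S,g_{|S})$ is a complex totally geodesic submanifold through the relevant point whose second fundamental form vanishes at that point, then the restriction inherits the $\Delta$-property — this is exactly why Lemma~\ref{APcoord} is phrased in terms of \emph{normal} coordinates in which the immersion is linear, since then $\Delta_M^k\phi(0)$ and $\Delta_S^k(\phi_{|S})(0)$ agree for functions pulled back from $S$, and the Euclidean Laplacians $\Delta_c^w$ and $\Delta_c^z$ are compatible under the coordinate inclusion \eqref{eqcoord}; (ii) the $\Delta$-property on a product is governed, in product coordinates, by the sum of the factor Laplacians.

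The core of the argument is then: by the classification of HSSNCT it suffices to treat $M$ irreducible, and by Theorem~B (or a direct check) the compact-type obstruction has a noncompact mirror, so I would argue that an irreducible HSSNCT $M$ of rank $r$ satisfying the $\Delta$-property forces, via Lemma~\ref{APcoord}, that $\big(\CH^1_r, g^r_{hyp}\big)$ — realized as a totally geodesic submanifold whose embedding is linear in the chosen normal coordinates $w$ — also satisfies equation \eqref{affine} at the origin, with the \emph{same} universal polynomials $p_k$ that work for $M$. Now I would compute both sides of \eqref{affine} on $\CH^1_r$ in affine coordinates $z$: the hyperbolic Laplacian is $\Delta_{hyp}^r = \sum_{j=1}^r (1-|z_j|^2)^2 \frac{\partial^2}{\partial z_j \partial \bar z_j}$, whose iterates at $0$ produce specific mixed polynomials in the operators $\frac{\partial^2}{\partial z_j\partial\bar z_j}$ (and their higher powers), while $p_k(\Delta_c^z) = p_k\big(\sum_j \frac{\partial^2}{\partial z_j\partial\bar z_j}\big)$ is symmetric and, crucially, for $k\geq 2$ contains no ``pure'' higher-order term $\frac{\partial^{2k}}{\partial z_j^k\partial\bar z_j^k}$ with coefficient matching that coming from iterating a single hyperbolic factor unless the cross terms conspire. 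Testing \eqref{affine} against monomials $|z_1|^{2k}$ versus $\prod_{j=1}^{\min(k,r)}|z_j|^2$ yields a system of equations on $p_k$ that is overdetermined precisely when $r\geq 2$, giving a contradiction; when $r=1$ one checks the equations are consistent, recovering $\CH^1$ (whose full $\Delta$-property, being a radial metric, is already noted in the text). Hence $M$ irreducible with the $\Delta$-property has rank $1$, i.e. is $\CH^n$ up to scaling, and then one uses Theorem~A together with the structure theory to conclude the only HSSNCT examples are the complex hyperbolic spaces.

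The step I expect to be the main obstacle is the combinatorial heart of the rank-$r$ computation: showing rigorously that no monic degree-$k$ polynomial $p_k$ can satisfy \eqref{affine} simultaneously for all the relevant test functions on $\CH^1_r$ when $r\geq 2$. Iterating $\Delta_{hyp}^r$ introduces, at the origin, contributions of the form $c_{k}\,\partial_{z_j}^{a}\partial_{\bar z_j}^{a}$ on each factor with $k$-dependent coefficients $c_k$ that grow like $(k-1)!$-type products coming from repeated differentiation of $(1-|z_j|^2)^2$, whereas $p_k(\sum_j \partial_{z_j}\partial_{\bar z_j})$ distributes its weight across factors according to multinomial coefficients; matching the ``all weight on one factor'' coefficient fixes the top coefficient of $p_k$, but then the ``weight split across two factors'' coefficient is forced to a value incompatible with $p_k$ being a \emph{polynomial in the single variable} $\Delta_c^z$. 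Making this incompatibility precise — ideally by isolating two specific coefficients and exhibiting that their ratio depends on $k$ in a way a one-variable polynomial cannot reproduce — is where the real work lies; the rest is bookkeeping with Lemma~\ref{APcoord}, the hereditary properties of the $\Delta$-property, and the classification of irreducible HSSNCT.
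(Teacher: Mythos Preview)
Your proposal rests on the heredity claim (i): that for a function $\phi$ pulled back from the totally geodesic polydisk $S=\CH^1_r\subset M$, the iterates $\Delta_M^k\phi(0)$ and $\Delta_S^k(\phi|_S)(0)$ coincide once the immersion is linear in the normal coordinates of Lemma~\ref{APcoord}. This is false for $k\geq 2$. It is true that the metric of $M$ is block--diagonal along $S$, so $(\Delta_M\phi)|_S=\Delta_S(\phi|_S)$; but $\Delta_M\phi$ depends on the normal coordinates through $g_M^{i\bar j}$, and applying $\Delta_M$ again brings in \emph{normal} derivatives of the ambient inverse metric. Concretely, in normal coordinates one has (cf.\ the computation leading to \eqref{laplquad})
\[
\Delta_M^2\phi(0)=\big((\Delta_c)^2+\lambda_M\,\Delta_c\big)\phi(0),\qquad
\Delta_S^2\phi(0)=\big((\Delta_c)^2+\lambda_S\,\Delta_c\big)\phi(0),
\]
where $\lambda_M$, $\lambda_S$ are the Einstein constants of $M$ and of $(\CH^1_r,g_{hyp}^r)$; these differ whenever $\dim M>r$. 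In particular, already for $M=\CH^n$ with $n\geq 2$ (rank $1$, so $S=\CH^1$) your heredity would force $p_2$ for $\CH^n$ and for $\CH^1$ to coincide, which they do not. Hence the reduction ``$M$ has the $\Delta$-property $\Rightarrow$ $\CH^1_r$ has the $\Delta$-property with the same $p_k$'' breaks down, and with it the rest of the argument.

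The paper does not attempt any such heredity. It keeps the \emph{ambient} operator $\Delta_M^3$ throughout and uses Lemma~\ref{APcoord} only to produce explicit test functions $|z_1|^4$ and $|z_1z_2|^2$ on which the correction terms in the general formula \eqref{laplcube} for $\Delta_M^3\phi(0)$ (those involving $\partial_{l\bar h}g^{i\bar j}(0)$, $\partial_{lh}g^{i\bar j}(0)$, etc.) can be evaluated from the known form of $g_M$ along the polydisk. A separate coordinate--change analysis, based on \cite[Theorem~2.1]{laplace}, then shows that if \eqref{affine} held for $k=1,2,3$ in some coordinates, one would be forced to have $\Delta_M^3(|z_1|^4)(0)=2\,\Delta_M^3(|z_1z_2|^2)(0)$, contradicting the two explicit values. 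So the role of the polydisk is to supply coordinates in which the ambient curvature data at the origin is computable, not to inherit the $\Delta$-property.
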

\begin{proof}
Let $M$ be an $n$-dimensional HSSNCT endowed with its \KE metric $ g$.  We denote by $\lambda$ the Einstein constant. Let $\tilde z=\left\{\tilde z_j\right\}$  be a holomorphic normal coordinate system centered in a point $x \in M$.  We have\footnote{We are going to use  the notation $\partial_i$ to denote $\frac{\partial}{\partial z_i}$ and a similar notation for higher order derivatives. We are also going to use Einstein's summation convention for repeated indices.}
 \begin{equation}\label{einstein}
 \lambda  g_{i\bar j}=\textrm{Ric}_{i\bar j}= g^{k \bar h}\left( - \partial_{ k \bar h}  g_{i\bar j}+ g^{p \bar q}\partial_k   g_{i\bar q}\partial_{\bar h}   g_{p\bar j}\right).
 \end{equation}
Hence, if we evaluate the previous equation at $0$, we get
\begin{equation}\label{sumder2}
 \sum_{h}\partial_{h\bar h}  g^{i \bar j}(0)=\lambda \delta^{i j}.  
 \end{equation}
By \eqref{sumder2}, we get
 \begin{equation}\label{laplquad}
 \Delta^2  \phi(0)=   g^{h\bar k} \partial_{k\bar h}\big(  g^{i\bar j} \partial_{j \bar i}\phi \big) \Big|_0=\Big((\Delta_c^{ \tilde z})^2+\lambda \Delta_c^{ \tilde  z}\Big)\phi(0)
 \end{equation}
that is (\ref{affine}) is satisfied also for $k=2$.

By combining \eqref{einstein}, \eqref{sumder2} and  \eqref{laplquad} above, we get that  every smooth function $\phi$ defined in a neighborhood $V$ of the origin fulfills the following
$$\Delta^3  \phi(0) =\Big((\Delta_c^{\tilde z})^3 +3\lambda(\Delta_c^{\tilde z})^2+\lambda^2\Delta_c^{\tilde z}\Big) \phi(0)+2 \sum_{ l,h=1}^{n} \partial_{l\bar h} g^{i\bar j}\partial_{j  h \bar l\bar i}\phi\ \Big|_0+$$
\begin{equation}\label{laplcube}
 +\sum_{ l,h=1}^{n} \partial_{lh} g^{i\bar j}\partial_{j \bar h \bar l\bar i}\phi\ \Big|_0+\sum_{ l,h=1}^{n} \partial_{\bar l\bar h}  g^{i\bar j}\partial_{j  h  l\bar i}\phi\ \Big|_0+\sum_{l,h=1}^{n}\partial_{l h\bar l\bar h} g^{i \bar j} \partial_{j\bar i}\phi\ \Big|_0,
 \end{equation}
 where we use that by differentiating  \eqref{einstein} and evaluating in the origin, the coefficients of the  third order derivatives of $\phi$ vanish.

Let $\left\{\tilde z_j = w_j\right\}$ be the system of normal coordinates given in Lemma \ref{APcoord} and assume (up to automorphism of $M$) that they are centered at $x$. If $\left\{z_i\right\}_{i=1,\dots,r}$ are  affine coordinates  on  $(\C H_r^1, g_{hyp}^r)$, by taking into account Lemma \ref{APcoord}, we can  compute
\begin{equation}\label{comp1}
\Delta^3  \big(|z_1|^4\big)\Big|_0={3\lambda}\big(\Delta_c^{w}\big)^2(|z_1|^4)\Big|_0+{8}\frac{\partial^2  g^{1\bar 1}}{\partial w_1\partial\overline{ w}_1}\Big|_0={12\lambda+16}.
\end{equation}
Furthermore, if $r\neq 1$, namely if $M$ is different from a complex hyperbolic space, we also compute

\begin{equation}\label{comp2}\begin{split} 
 \Delta^3   \big(|z_1z_2|^2\big)\Big|_0&={3\lambda}\big(\Delta_c^{w}\big)^2  (|z_1z_2|^2)\Big|_0\\
 &+4\left(\frac{\partial^2 g^{2\bar 2}}{\partial w_1\partial\overline{ w}_1}+ \frac{\partial^2 g^{1\bar 1}}{\partial w_2\partial\overline{ w}_2}+ \frac{\partial^2 g^{1\bar 2}}{\partial w_2\partial\overline{ w}_1}+ \frac{\partial^2 g^{2\bar 1}}{\partial w_1\partial\overline{ w}_2} \right)\Big|_0\\
 &={6\lambda}.
\end{split}\end{equation}

If $M$ has rank greater than $1$,  let us assume by contradiction that the $\Delta$-property is valid, in particular around each point of $M$ there exists a local coordinate system with respect to which \eqref{affine} is satisfied for $k=1,2,3$. Let us denote such coordinate system by $f=(f_1,\mathellipsis,f_n)$.
Since in \cite[Theorem 2.1]{laplace} is showed that every second order derivative of the holomorphic change of coordinates sending $f$ to $\tilde z$ vanish at $f=0$,
we get
$$\Delta^{3}\phi(0)=\Big((\Delta_c^{ f})^{3}+\sum_{i=1}^2 a_i(\Delta_c^{ f})^i\Big)\phi(0)=$$
$$=\Big(\sum_{i=1}^2 a_i(\Delta_c^{\tilde z})^i\Big)\phi\Big|_0+\sum_{i_1,i_2,i_{3},\alpha,\beta}\frac{\de^{3} \tilde z_\alpha}{\de { f}_{i_1} \de { f}_{i_2}\de { f}_{i_{3}}}\overline{\frac{\de^{3} \tilde z_\beta}{\de  { f}_{i_1} \de { f}_{i_2}\de { f}_{i_{3}}}}\frac{\de^2\phi}{\de \tilde z_\alpha\de\bar{\tilde z}_\beta}\Big|_0+$$
$$+(\Delta_c^{\tilde z})^{3}\phi\Big|_0+\sum_{\substack{i_1,i_2,i_{3}\\\alpha_1,\mathellipsis,\alpha_{4}}}\frac{\de^{3} \tilde z_{\alpha_{4}}}{\de { f}_{i_1} \de { f}_{i_2}\de { f}_{i_{3}}}\prod_{l=1}^{3} \overline{\frac{\de {\tilde z}_{\alpha_l}}{\de {  f}_{i_l}}}\frac{\de^{4}\phi}{\de\bar{ \tilde z}_{\alpha_1}\de\bar{ \tilde z}_{\alpha_2}\de\bar{\tilde z}_{\alpha_{3}}\de{\tilde z}_{\alpha_{4}}}\Big|_0+ $$
$$+\sum_{\substack{i_1,i_2,i_{3}\\\alpha_1,\mathellipsis,\alpha_{4}}}\overline{\frac{\de^{3} \tilde z_{\alpha_{4}}}{\de { f}_{i_1} \de { f}_{i_2}\de { f}_{i_{3}}}}\ \prod_{l=1}^{3} {\frac{\de {\tilde z}_{\alpha_l}}{\de {  f}_{i_l}}}\frac{\de^{4}\phi}{\de \tilde z_{\alpha_1}\de { \tilde z}_{\alpha_2}\de{\tilde z}_{\alpha_{3}}\de\bar{\tilde z}_{\alpha_{4}}}\Big|_0. $$
The previous formula implies   the relation
$$\Delta^3  \big(|z_1|^4\big)(0)=2\Delta^3  \big(|z_1z_2|^2\big)(0),$$ therefore we have a contradiction from the comparison with \eqref{comp1} and \eqref{comp2}. The proof is complete.
\end{proof}
\begin{rem}
\rm Notice that (see \cite[Remark 3]{laplace}) we have proved the stronger statement that the complex hyperbolic space is the unique HSSNCT such that around any point there exists a global coordinate system with respect to which (\ref{affine}) is satisfied for $k=1,2,3$.
\end{rem}

Finally, we can prove our main result.
\vspace{0.3cm}

\noindent{\it Proof of Theorem \ref{main}.} By Theorem A in the introduction,  a \K manifold $\left(M,g\right)$ satisfying the $\Delta$-property is an Hermitian symmetric space. Therefore $\left(M,g\right)$ can be decomposed as a  \K product
\begin{equation*}\label{decomposition}
(\C^n,g_0)\times (C_1,g_1)\times\mathellipsis\times( C_h,g_h)\times (N_1, \hat g_1)\times\mathellipsis\times (N_l, \hat g_l),
\end{equation*}
where $(\C^n,g_0)$ is  the flat Euclidean space, $(C_i,g_i)$ are  irreducible HSSCT and $(N_i, \hat g_i)$ are irreducible HSSNCT.

By \cite[Theorem 2.1]{laplace}, a   Hermitian symmetric space where \eqref{affine} is fulfilled for $k=1,2$, is the flat Euclidean space otherwise  it is a \K product of  Hermitian symmetric space of  either compact  or   noncompact type.  Hence, we  are going to prove our statement by characterizing the complex projective space form among  HSSCT in analogy with what we have done for hyperbolic spaces  in Proposition \ref{hssct}. 

Let $\left(C,g\right)$ be an HSSCT,  let $\left(C^*,g^*\right)$ the non compact dual and $\left(V,\left\{,,\right\}\right)$ be the associated HPJTS. Let us identify $V$ with $\C^n$ by fixing any complex basis of $V$. Then (see e.g. \cite[Section 2.4]{sympldual}) $V$ equipped with the \K form  
$$
\w_{FS} = \frac{i}{2}\de\deb \log N(z,-z)
$$
is holomorfically isometric (up to homotheties) to an open dense subset of $\left(C^*,g^*\right)$, therefore we can consider the coordinate system given in Lemma \ref{APcoord} as local coordinates for $\left(C,g\right)$. With respect this coordinates, the \K potential $\Phi=\log N(z,-z)$ for the metric $g$ satisfies 
\begin{equation}\label{ff}
\Phi (z, \bar z)=-\Phi^*(z,-\bar z)_{|C^*}
\end{equation}
where $\Phi^*  (z, \bar z)=-\log N(x,x)$ is the \K potential for $g^*$ given in \eqref{eqbsd}.

By \eqref{ff} and   \eqref{laplcube}, we get
$$\Delta_C^3\big (|z_iz_j|^2\big)(0)=-\Delta_{C^*}^3\big (|z_iz_j|^2\big)(0)$$
for every $1\leq i,j\leq \dim(C)$. Hence, if $C^*$ is not the hyperbolic space, namely if $C$ is not a complex projective space, \eqref{affine} for $k=3$ cannot be satisfied as proved in Proposition \ref{hssct}. The proof is complete.
\hfill $\Box$

\end{document}